\DeclareTextSymbolDefault{\textquotedbl}{T1}
\numberwithin{equation}{section}
\numberwithin{figure}{section}
\theoremstyle{plain}
\newtheorem{thm}{\protect\theoremname}
\theoremstyle{definition}
\newtheorem{defn}[thm]{\protect\definitionname}
\providecommand{\definitionname}{Definition}
\providecommand{\theoremname}{Theorem}
\begin{document}
\title{{\normalsize{}Abundance of arithmetic progressions in some combiantorially
large sets }}
\author{Pintu Debnath and Sayan Goswami}
\address{{\large{}Pintu Debnath, Department of Mathematics, Basirhat College,
Basirhat -743412, North 24th parganas, West Bengal, India.}}
\email{{\large{}pintumath1989@gmail.com}}
\address{{\large{}Sayan Goswami, Department of Mathematics, University of Kalyani,
Kalyani-741235, Nadia, West Bengal, India.}}
\email{{\large{}sayan92m@gmail.com}}
\keywords{{\large{}Quasi central set, Piecewise syndeticity, van-der Waerden's
Theorem}}
\thanks{{\large{}The second author is supported by UGC-JRF fellowship.}}
\begin{abstract}
{\large{}Furstenberg and Glasner proved that for an arbitrary $k\in\mathbb{N}$,
any piecewise syndetic set contains $k-$term arithmetic progression
and such collection is also piecewise syndetic in $\mathbb{Z}.$ They
used algebraic structure of $\beta\mathbb{N}$. The above result was
extended for arbitrary semigroups by Bergelson and Hindman, again
using the structure of Stone-\v{C}ech compactification of general
semigroup. However they provided the abundances for various types
of large sets. But the abundances in many large sets is still unknown.
In this work we will provide the abundance in quasi-central sets,
$J-sets$ and $C-sets$.}{\large\par}
\end{abstract}

\maketitle

\section{introduction}

{\large{}A subset $S$ of $\mathbb{\ensuremath{Z}}$ is called syndetic
if there exists $r\in\mathbb{N}$ such that $\bigcup_{i=1}^{r}(S-i)=\mathbb{Z}$.
Again a subset $S$ of $\mathbb{\ensuremath{Z}}$ is called thick
if it contains arbitrary long intervals in it. Sets which can be expressed
as intersection of thick and syndetic sets, are called piecewise syndetic. }{\large\par}

{\large{}One of the famous Ramsey theoretic results is so called van
derWaerden\textquoteright s Theorem \cite{key-9} which states that
one cell of any partition $\{C_{1},C_{2},\ldots,C_{r}\}$ of $\mathbb{N}$
contains arithmetic progression of arbitrary length. Since arithmetic
progressions are invariant under shifts, it follows that every piecewise
syndetic set contains arbitrarily long arithmetic progressions.}{\large\par}

{\large{}Furstenberg and E. Glasner in \cite{key-3} algebraically
and Beiglboeck in \cite{key-1.1} combinatorially, proved that if
$S$ is a piecewise syndetic subset of $\mathbb{Z}$ and $l\in\mathbb{N}$
then the set of all length $l$ progressions contained in $S$, is
also large.}{\large\par}
\begin{thm}
{\large{}\label{Thm 1} Let $k\in\mathbb{N}$ and assume that $S\subseteq\mathbb{Z}$
is piecewise syndetic. Then $\{(a,d)\,:\,a,a+d,\ldots,a+kd\in S\}$
is piecewise syndetic in $\mathbb{Z}^{2}$.}{\large\par}
\end{thm}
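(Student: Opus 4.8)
The plan is to exploit the algebraic structure of $\beta\mathbb{Z}$ and $\beta(\mathbb{Z}^{2})$, using that a subset of a discrete semigroup is piecewise syndetic precisely when its closure meets the smallest two-sided ideal $K(\cdot)$ of the associated Stone-\v{C}ech compactification. For $0\le i\le k$ let $h_{i}:\mathbb{Z}^{2}\to\mathbb{Z}$ be the homomorphism $h_{i}(a,d)=a+id$, so that $\{(a,d):a,a+d,\dots,a+kd\in S\}=\bigcap_{i=0}^{k}h_{i}^{-1}(S)=:A_{k}$. Each $h_{i}$ extends to a continuous homomorphism $\widetilde{h_{i}}:\beta(\mathbb{Z}^{2})\to\beta\mathbb{Z}$, and for $q\in\beta(\mathbb{Z}^{2})$ one has $A_{k}\in q$ if and only if $\widetilde{h_{i}}(q)\in\overline{S}$ for every $i$. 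Thus it suffices to produce a single $q\in K(\beta(\mathbb{Z}^{2}))$ whose $k+1$ images $\widetilde{h_{i}}(q)$ all lie in $\overline{S}$; then $\overline{A_{k}}\cap K(\beta(\mathbb{Z}^{2}))\neq\emptyset$, which is exactly the desired conclusion.

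To build $q$ I would first pass to a central set. Since $S$ is piecewise syndetic, $\bigcup_{i\in F}(S-i)$ is thick for some finite $F$, hence central, so there is a minimal idempotent $e\in\beta\mathbb{Z}$ and an $i_{0}\in F$ with $S-i_{0}\in e$. As $A_{k}$ is the translate by $(i_{0},0)$ of the corresponding set for $S-i_{0}$, and translation preserves piecewise syndeticity, I may assume outright that $S\in e$ for a minimal idempotent $e$. Now I would seek a minimal idempotent $q\in\beta(\mathbb{Z}^{2})$ with $\widetilde{h_{i}}(q)=e$ for all $i$: since $S\in e$ this forces $\widetilde{h_{i}}(q)\in\overline{S}$, so $A_{k}\in q$ and $A_{k}$ is in fact central in $\mathbb{Z}^{2}$. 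The mechanism is the standard fact that a continuous surjective homomorphism of compact right topological semigroups carries $K$ onto $K$ and realises every minimal idempotent of the image as the image of a minimal idempotent of the source. Applying this to $H=(\widetilde{h_{0}},\dots,\widetilde{h_{k}}):\beta(\mathbb{Z}^{2})\to W$, where $W=H(\beta(\mathbb{Z}^{2}))\subseteq(\beta\mathbb{Z})^{k+1}$, it is enough to know that the diagonal idempotent $(e,\dots,e)$ lies in $K(W)$; pulling it back then yields the required $q$.

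The heart of the matter, and the step I expect to be hardest, is precisely this: controlling all $k+1$ coordinates simultaneously while staying inside the smallest ideal, i.e. verifying $(e,\dots,e)\in K(W)$. The coordinate $i=0$ is free, since $\widetilde{h_{0}}$ is just the first projection, but for $i\ge1$ the maps couple the initial term $a$ with the common difference $d$, and it is this coupling that must be forced into $K$. When $k=1$ the map $(a,d)\mapsto(a,a+d)$ is an automorphism of $\mathbb{Z}^{2}$, so $W=(\beta\mathbb{Z})^{2}$, $K(W)=K(\beta\mathbb{Z})\times K(\beta\mathbb{Z})$, and $(e,e)\in K(W)$ automatically; for $k\ge2$, $W$ is the closed subsemigroup generated by the ``arithmetic-progression lattice'' $\{(a,a+d,\dots,a+kd)\}$ and minimality of the diagonal is no longer formal. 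This is exactly where van der Waerden's Theorem enters: it supplies the $(k+1)$-fold recurrence that places $(e,\dots,e)$ in $K(W)$, or, equivalently, one invokes the Central Sets Theorem to construct $q$ directly, van der Waerden guaranteeing that the relevant sets stay nonempty at each stage. A purely combinatorial route via van der Waerden's Theorem is also available --- colouring a long interval on which $S$ is syndetic and extracting monochromatic progressions --- but there the same difficulty resurfaces as that of upgrading a single progression into the solid two-dimensional box that piecewise syndeticity in $\mathbb{Z}^{2}$ requires.
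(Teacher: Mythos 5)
First, note that the paper does not prove this statement at all: it is quoted as known, with the algebraic proof attributed to Furstenberg--Glasner \cite{key-3} and the combinatorial one to Beiglboeck \cite{key-1.1}. So your attempt can only be compared with those; your skeleton is in fact the Furstenberg--Glasner/Bergelson--Hindman one, and all the standard facts you invoke are correct as stated: the characterization of piecewise syndeticity via $K$, the extensions $\widetilde{h_i}$, the reduction to a central $S$ by translation, the identification of $W=H(\beta(\mathbb{Z}^2))$ with the closure of $E=\{(a,a+d,\dots,a+kd):a,d\in\mathbb{Z}\}$, and the fact that a continuous surjective homomorphism of compact right topological semigroups carries $K$ onto $K$ and lets you lift minimal idempotents. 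But the write-up stops exactly at its crux: you never prove $(e,\dots,e)\in K(W)$; you only assert that van der Waerden's Theorem (or, ``equivalently'', the Central Sets Theorem) ``enters'' there, with no argument given for either route. An unproved key step --- one you yourself single out as the heart of the matter --- is a genuine gap, so as it stands this is a plan, not a proof.

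Moreover, your diagnosis of that step is wrong: no van der Waerden input is needed, and the case $k\ge 2$ is exactly as formal as $k=1$. The three observations you are missing are: (i) since $E$ contains the degenerate progressions ($d=0$), $W=\overline{E}$ contains the closure in $(\beta\mathbb{Z})^{k+1}$ of the diagonal copy of $\mathbb{Z}$, and that closure is precisely $\{(p,\dots,p):p\in\beta\mathbb{Z}\}$ (extend $a\mapsto(a,\dots,a)$ continuously; each coordinate of the extension is the identity); hence $(e,\dots,e)\in W$; (ii) the smallest ideal of a finite product is the product of the smallest ideals, $K\bigl((\beta\mathbb{Z})^{k+1}\bigr)=K(\beta\mathbb{Z})^{k+1}$ (Theorem 2.23 of \cite{key-6}), so $(e,\dots,e)\in K\bigl((\beta\mathbb{Z})^{k+1}\bigr)$; (iii) if a compact subsemigroup $T$ of a compact right topological semigroup $Y$ meets $K(Y)$, then $K(T)=T\cap K(Y)$ (Theorem 1.65 of \cite{key-6}). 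Applying (iii) with $T=W$ and $Y=(\beta\mathbb{Z})^{k+1}$ gives $(e,\dots,e)\in K(W)$ immediately, and your own pull-back argument then finishes the proof. Note the punchline: the completed argument does not consume van der Waerden's Theorem, it yields it --- the piecewise syndetic set $\{(a,d):a,a+d,\dots,a+kd\in S\}$ cannot be contained in the non-piecewise-syndetic line $\mathbb{Z}\times\{0\}$, so nondegenerate progressions must exist. The combinatorial content you were hunting for is entirely absorbed by Ellis's idempotent lemma and the smallest-ideal theory; that is precisely what makes the algebraic proof work.
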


{\large{}The above theorem can be proved for the set of Natural Numbers
$\mathbb{N}$ in a similar way. In \cite{key-2,key-7} the above result
was studied for various large sets viz. Central, Thick, IP sets etc.
for general semigroups. But for there are many large sets that are
remained to be studied. }{\large\par}

{\large{}To state about those sets we need some prerequisite of Stone-\v{C}ech
compactification of general semigroup which is given below.}{\large\par}

{\large{}Let $\beta S$, be the ultrafilters on $S$, identifying
the principal ultrafilters with the points of $S$ and thus pretending
that $S\subseteq\beta S$. Given$A\subseteq S$ let us set, 
\[
\overline{A}=\{p\in\beta S:A\in p\}.
\]
 Then the set $\{\overline{A}:A\subseteq S\}$ is a basis for a topology
on $\beta S$. The operation$+$ on $S$ can be extended to the Stone-\v{C}ech
compactification $\beta S$ of $S$ so that $(\beta S,+)$ is a compact
right topological semigroup (meaning that for any $p\in\beta S$,
the function $\rho_{p}:\beta S\rightarrow\beta S$ defined by $\rho_{p}(q)=q+p$
is continuous) with $S$ contained in its topological center (meaning
that for any $x\in S$, the function $\lambda_{x}:\beta S\rightarrow\beta S$
defined by $\lambda_{x}(q)=x+q$ is continuous). Given $p,q\in\beta S$
and $A\subseteq S$, $A\in p+q$ if and only if $\{x\in S:-x+A\in q\}\in p$,
where $-x+A=\{y\in S:x+y\in A\}$. }{\large\par}

{\large{}A nonempty subset $I$ of a semigroup $(T,+)$ is called
a left ideal of $\emph{T}$ if $T+I\subset I$, a right ideal if $I+T\subset I$,
and a two sided ideal (or simply an ideal) if it is both a left and
right ideal. A minimal left ideal is the left ideal that does not
contain any proper left ideal. Similarly, we can define minimal right
ideal and smallest ideal.}{\large\par}

{\large{}Any compact Hausdorff right topological semigroup $(T,+)$
has a smallest two sided ideal}{\large\par}

{\large{}
\[
\begin{array}{ccc}
K(T) & = & \bigcup\{L:L\text{ is a minimal left ideal of }T\}\\
 & = & \,\,\,\,\,\bigcup\{R:R\text{ is a minimal right ideal of }T\}
\end{array}
\]
}{\large\par}

{\large{}Given a minimal left ideal $L$ and a minimal right ideal
$R$, $L\cap R$ is a group, and in particular contains an idempotent.
An idempotent in $K(T)$ is called a minimal idempotent. If $p$ and
$q$ are idempotents in $T$, we write $p\leq q$ if and only if $p+q=q+p=p$.
An idempotent is minimal with respect to this relation if and only
if it is a member of the smallest ideal. }{\large\par}
\begin{defn}
{\large{}A set $A\subseteq S$ in a semigroup $(S,\cdot)$ is said
to be an IP-set if $A$ belongs to some idempotent of $\beta S$.
A set $E\subset S$ is called called an IP$^{*}$-set iff it meets
nontrivially every IP-set, alternatively if $E$ is contained in every
idempotent in $\beta S$.}{\large\par}
\end{defn}

{\large{}The definition of central set in \cite{key-2-1} was in terms
of dynamical systems, and the definition makes sense in any semigroup.
In \cite{key-1-1} that definition was shown to be equivalent to a
much simpler algebraic characterization if the semigroup is countable.
It is this algebraic characterization which we take as the definition
for all semigroups,}{\large\par}
\begin{defn}
{\large{}Let $(S,.)$ be a semigroup and let $A\subseteq S$. Then
$A$ is central if and only if there is some minimal idempotent $p\in\beta S$
with $p\in\overline{A}.$}{\large\par}
\end{defn}

{\large{}However the two sided ideal $K(\beta S)$ is not closed in
$\beta S$ and any member of any idempotent in the closure of $K(\beta S)$
is called Quasi-central sets.}{\large\par}
\begin{defn}
{\large{}\cite[Definition 1.2]{key-5} Let $(S,.)$ be a semigroup
and let $A\subseteq S$. Then $A$ is quasi-central if and only if
there is some idempotent $p\in cl(K(\beta S))$ with $p\in\overline{A}.$}{\large\par}
\end{defn}

{\large{}It has a nice combinatorial property which is given:}{\large\par}
\begin{thm}
{\large{}\label{Th 1} \cite[Theorem 3.7]{key-5} For a countable
semigroup $(S,.)$, $A\subseteq S$ is said to be Quasi-central iff
there is a decreasing sequence $\langle C_{n}\rangle_{n=1}^{\infty}$
of subsets of $A$ such that,}{\large\par}

{\large{}$(1)$ \label{prop 1} for each $n\in\mathbb{N}$ and each
$x\in C_{n}$, there exists $m\in\mathbb{N}$ with $C_{m}\subseteq x^{-1}C_{n}$
and}{\large\par}

{\large{}$(2)$ \label{prop 2} $C_{n}$ is piecewise syndetic $\forall n\in\mathbb{N}$.}{\large\par}
\end{thm}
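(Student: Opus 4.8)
The plan is to exploit the standard dictionary between the topological-algebraic description of quasi-centrality and piecewise syndeticity. Recall that $C$ is piecewise syndetic iff $\overline{C}\cap K(\beta S)\neq\emptyset$, and hence that an idempotent $p$ lies in $cl(K(\beta S))$ iff every member of $p$ is piecewise syndetic. Consequently condition $(2)$ becomes automatic the moment each $C_{n}$ is chosen inside a suitable idempotent $p\in cl(K(\beta S))$, and the whole problem reduces to matching the \emph{tree condition} $(1)$ with idempotency. I will use repeatedly the basic idempotent lemma: if $p\cdot p=p$ and $B\in p$, then $B^{\star}:=\{x\in B:x^{-1}B\in p\}\in p$ and, for every $x\in B^{\star}$, $x^{-1}B^{\star}\in p$; in particular $(B^{\star})^{\star}=B^{\star}$, so every such $B^{\star}$ is a $\star$-set.

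For the forward implication, fix an idempotent $p\in cl(K(\beta S))$ with $A\in p$ and enumerate $S=\{s_{k}:k\in\mathbb{N}\}$. I would build a decreasing sequence of $\star$-sets in $p$ by a diagonal recursion: set $C_{1}=A^{\star}$, and having produced $\star$-sets $C_{1}\supseteq\cdots\supseteq C_{n}$ in $p$, put
\[
E_{n+1}=C_{n}\cap\bigcap\{x^{-1}C_{j}:j\leq n,\ x\in C_{j}\cap\{s_{1},\dots,s_{n}\}\},\qquad C_{n+1}=E_{n+1}^{\star}.
\]
Because each $C_{j}$ is a $\star$-set and $x\in C_{j}$, every $x^{-1}C_{j}$ lies in $p$, so $E_{n+1}$ is a finite intersection of members of $p$ and $C_{n+1}=E_{n+1}^{\star}\in p$ is again a $\star$-set contained in $C_{n}\subseteq A$. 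For the tree condition, given $x\in C_{n}$ write $x=s_{k}$ and take any $N\geq\max\{n,k\}$; then $x^{-1}C_{n}$ is one of the sets intersected in $E_{N+1}$, whence $C_{N+1}\subseteq x^{-1}C_{n}$, and $(1)$ holds with $m=N+1$. Since every $C_{n}\in p\subseteq cl(K(\beta S))$, each $C_{n}$ is piecewise syndetic, which is $(2)$.

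For the converse, assume $\langle C_{n}\rangle$ satisfies $(1)$ and $(2)$ and set $T=\bigcap_{n}\overline{C_{n}}$, a nonempty closed subset of $\beta S$. The crucial step is that the tree condition forces $T$ to be a subsemigroup: for $p,q\in T$ and each $x\in C_{n}$ the tree condition gives $m$ with $C_{m}\subseteq x^{-1}C_{n}$, and since $C_{m}\in q$ this yields $x^{-1}C_{n}\in q$; thus $C_{n}\subseteq\{x:x^{-1}C_{n}\in q\}$, and as $C_{n}\in p$ the larger set also lies in $p$, i.e.\ $C_{n}\in p\cdot q$, so $p\cdot q\in T$. Now it is standard that $cl(K(\beta S))$ is a closed two-sided ideal, hence a compact subsemigroup, and by $(2)$ together with the piecewise-syndeticity characterization each $\overline{C_{n}}\cap cl(K(\beta S))\supseteq\overline{C_{n}}\cap K(\beta S)\neq\emptyset$; as these sets are closed and decreasing, compactness makes $T\cap cl(K(\beta S))$ a nonempty compact right-topological semigroup. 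By Ellis' theorem it contains an idempotent $p$, which lies in $cl(K(\beta S))$ and satisfies $C_{n}\in p$ for all $n$; since $C_{1}\subseteq A$ we get $A\in p$, so $A$ is quasi-central.

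The main obstacle is the converse, and specifically the identification of the tree condition $(1)$ as exactly the hypothesis needed to close $\bigcap_{n}\overline{C_{n}}$ under the extended multiplication; once that computation is in place, the only remaining delicacy is to guarantee that the idempotent produced lands inside $cl(K(\beta S))$ rather than merely in $\beta S$, which is handled by intersecting with the closed ideal $cl(K(\beta S))$ and invoking $(2)$. In the forward direction the only real care needed is the bookkeeping in the diagonal recursion ensuring that every pair $(n,x)$ with $x\in C_{n}$ is eventually served.
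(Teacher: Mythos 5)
Your proof is correct. Note, however, that the paper you were asked to match does not actually prove this statement: Theorem \ref{Th 1} is quoted there without proof, as a citation of Hindman--Maleki--Strauss \cite[Theorem 3.7]{key-5}, so there is no internal argument to compare against. What you have written is essentially the standard proof from that source and from Hindman--Strauss: in the forward direction, the $\star$-set lemma for idempotents ($B^{\star}\in p$, $x^{-1}B^{\star}\in p$ for $x\in B^{\star}$, hence $(B^{\star})^{\star}=B^{\star}$) combined with a diagonal recursion over an enumeration of the countable semigroup, which is exactly where countability is needed; in the converse, the observation that condition $(1)$ makes $T=\bigcap_{n}\overline{C_{n}}$ a subsemigroup, that condition $(2)$ together with the characterization of piecewise syndeticity ($\overline{C}\cap K(\beta S)\neq\emptyset$) and the fact that $cl(K(\beta S))$ is a closed two-sided ideal forces $T\cap cl(K(\beta S))$ to be a nonempty compact right topological semigroup, and then Ellis' theorem to extract the required idempotent. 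All the individual steps you invoke (the $\star$-set lemma, the piecewise syndetic characterization of $cl(K(\beta S))$, the ideal property of $cl(K(\beta S))$, Ellis--Numakura) are standard and correctly applied, and your bookkeeping in the recursion does serve every pair $(n,x)$ with $x\in C_{n}$.
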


{\large{}The importance of Quasi Central sets is it is very close
to Central Sets and enjoy a close combinatorial property to those
sets. }{\large\par}

{\large{}There is another important set which is known as $J-set$
defined as }{\large\par}
\begin{defn}
{\large{}Let $(S,+)$is a commutative semigroup and let $A\subseteq S$
is said to be a $J-set$ iff whenever $F\in\mathcal{P}_{f}\left(S^{\mathbb{N}}\right)$,
there exist $a\in S$ and $H\in\mathcal{P}_{f}(\mathbb{N})$ such
that for each $f\in F,$ $a+\sum_{t\in H}f(t)\in A$.}{\large\par}
\end{defn}

{\large{}It can be shown that a piecewise syndetic set is also a $J-set$
\cite[Theorem 14.8.3, page 336]{key-6}. The set $J(S)=\left\{ p\in\beta S:for\,all\,A\in p,A\,is\,a\,J-set\right\} $
is a compact two sided ideal of $\beta S$. The $C-set$ was defined
as those sets satiesfying the conclusion of the central set theorem
\cite[Theorem 2.2]{key-0}. It can be shown that if $A$ is $C-set$,
then there exist an idempotent $p\in J(S)$ such that $A\in p$. It
has a nice combinatorial property as theorem 5 given below:}{\large\par}
\begin{thm}
{\large{}\label{Th 2} \cite[Theorem 14.27, page 358]{key-6} For
a countable semigroup $(S,.)$, $A\subseteq S$ is a $C-set$ iff
there is a decreasing sequence $\langle C_{n}\rangle_{n=1}^{\infty}$
of subsets of $A$ such that,}{\large\par}

{\large{}$(1)$ \label{prop 2-1} for each $n\in\mathbb{N}$ and each
$x\in C_{n}$, there exists $m\in\mathbb{N}$ with $C_{m}\subseteq x^{-1}C_{n}$
and}{\large\par}

{\large{}$(2)$ \label{prop 2-2} $C_{n}$ is a $J-set$ $\forall n\in\mathbb{N}$.}{\large\par}
\end{thm}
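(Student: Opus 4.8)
The plan is to prove the two implications separately, following exactly the template that yields the quasi-central characterisation (Theorem \ref{Th 1}): the only changes needed are to replace ``piecewise syndetic'' by ``$J$-set'' and the closed ideal $cl(K(\beta S))$ by the closed ideal $J(S)$. Accordingly, the two external facts I would lean on are that $J(S)$ is a \emph{closed} two-sided ideal of $\beta S$ and that a set $B\subseteq S$ is a $J$-set if and only if $\overline{B}\cap J(S)\neq\emptyset$ \cite{key-6}.

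For the direction ($\Leftarrow$), suppose the sequence $\langle C_n\rangle$ is given and set $E=\bigcap_{n=1}^{\infty}\overline{C_n}$. First I would check that $E$ is a subsemigroup of $\beta S$: given $p,q\in E$ and $n\in\mathbb{N}$, to see $C_n\in p\cdot q$ it suffices, since $C_n\in p$, to show $C_n\subseteq\{x:x^{-1}C_n\in q\}$; but for $x\in C_n$ property (1) yields $m$ with $C_m\subseteq x^{-1}C_n$, and $C_m\in q$, so $x^{-1}C_n\in q$. Thus $E$ is a closed, hence compact, subsemigroup. Next, property (2) says each $C_n$ is a $J$-set, so $\overline{C_n}\cap J(S)\neq\emptyset$; as the $\overline{C_n}$ are decreasing and closed and $J(S)$ is compact, the finite intersection property gives $E\cap J(S)\neq\emptyset$. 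Since $J(S)$ is a two-sided ideal, $E\cap J(S)$ is a compact subsemigroup, hence contains an idempotent $p$ by Ellis' theorem. Then $p\in J(S)$ and $C_1\in p$ with $C_1\subseteq A$, so $A\in p$, which (being equivalent to $A$ being a $C$-set) finishes this direction.

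For the direction ($\Rightarrow$), fix an idempotent $p\in J(S)$ with $A\in p$ (which exists since $A$ is a $C$-set), and recall the star lemma \cite{key-6}: for $B\in p$ the set $B^{\star}=\{x\in B:x^{-1}B\in p\}$ lies in $p$ and satisfies $x^{-1}(B^{\star})\in p$ for every $x\in B^{\star}$; in particular $(B^{\star})^{\star}=B^{\star}$, so in a ``starred'' set \emph{every} element $x$ genuinely has $x^{-1}C\in p$. Enumerating $S=\{s_1,s_2,\ldots\}$, I would build the sequence recursively: put $C_1=A^{\star}$, and having $C_1,\ldots,C_n$ (each starred, in $p$, decreasing, contained in $A$), set
\[
B_{n+1}=C_n\cap\bigcap\{x^{-1}C_k:1\le k\le n,\ x\in C_k\cap\{s_1,\ldots,s_n\}\},\qquad C_{n+1}=B_{n+1}^{\star}.
\]
Each term $x^{-1}C_k$ lies in $p$ because $x\in C_k=C_k^{\star}$, so the finite intersection $B_{n+1}\in p$ and $C_{n+1}\in p$ is starred with $C_{n+1}\subseteq C_n\subseteq A$. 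Condition (2) is then immediate, since $C_n\in p\in J(S)$ forces each $C_n$ to be a $J$-set. For condition (1), given $n$ and $x=s_i\in C_n$, at the stage $N=\max(n,i)$ the pair $(k,x)=(n,s_i)$ is eligible in the intersection, so $B_{N+1}\subseteq x^{-1}C_n$ and hence $C_{N+1}\subseteq x^{-1}C_n$; thus $m=N+1$ works.

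I expect the forward direction to be the delicate part. The tension is that condition (1) must hold for \emph{all} $x\in C_n$, which forces $x^{-1}C_n\in p$ for every such $x$; keeping each $C_n$ starred is precisely what guarantees this, and it must be maintained simultaneously with the bookkeeping that discharges each obligation $(n,x)$ at a finite stage. The reverse direction, by contrast, is routine once the equivalence ``$B$ is a $J$-set $\Leftrightarrow\overline{B}\cap J(S)\neq\emptyset$'' is available; there the only points needing care are that nonemptiness of $E\cap J(S)$ comes from compactness of $J(S)$ via the finite intersection property, and that the intersection remains a semigroup because $J(S)$ is an ideal.
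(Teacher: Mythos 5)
Your proof is correct. Note that the paper itself gives no proof of this statement---it is quoted from Hindman--Strauss \cite[Theorem 14.27]{key-6}---and your argument is essentially the standard one from that reference: the sufficiency via the compact subsemigroup $\bigcap_{n}\overline{C_n}$ meeting the closed two-sided ideal $J(S)$ (using partition regularity of $J$-sets and Ellis' theorem), and the necessity via the star lemma for an idempotent $p\in\overline{A}\cap J(S)$ together with a recursive construction over an enumeration of the countable semigroup $S$, which is exactly where countability is needed.
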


{\large{}In \cite[Theorem 3.6]{key-5-1} it was proved that polynomial
progressions in piecewise syndetic sets are abundance in nature.}{\large\par}

{\large{}Now first we give analog result of \ref{Thm 1} for quasi
central sets, then in $J-sets$ and $C-sets$.}{\large\par}

\section{\uline{proof of main theorems}}
\begin{thm}
{\large{}\label{THM 7} For any quasi-central $M\subseteq\mathbb{N}$
the collection $\{(a,b):\,\{a,a+b,a+2b,\ldots,a+lb\}\subset M\}$
is quasi-central in $(\mathbb{N\times\mathbb{N}},+)$.}{\large\par}
\end{thm}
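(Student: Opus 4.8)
The plan is to invoke the combinatorial characterization of quasi-central sets from Theorem \ref{Th 1} twice: first in $\mathbb{N}$ to unpack the hypothesis, and then in $(\mathbb{N}\times\mathbb{N},+)$ to assemble the conclusion. Since $M$ is quasi-central, Theorem \ref{Th 1} furnishes a decreasing sequence $\langle C_n\rangle_{n=1}^{\infty}$ of subsets of $M$ such that each $C_n$ is piecewise syndetic and, for every $x\in C_n$, there is some $m$ with $C_m\subseteq -x+C_n$ (writing the condition $C_m\subseteq x^{-1}C_n$ in additive notation). For each $n$ I would set
\[
D_n=\{(a,b)\in\mathbb{N}\times\mathbb{N}:\{a,a+b,\ldots,a+lb\}\subseteq C_n\}.
\]
Because $\langle C_n\rangle$ is decreasing and each $C_n\subseteq M$, the sequence $\langle D_n\rangle$ is decreasing and each $D_n$ is contained in the target set $A=\{(a,b):\{a,a+b,\ldots,a+lb\}\subseteq M\}$. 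It then suffices to verify that $\langle D_n\rangle$ satisfies the two clauses of Theorem \ref{Th 1} in the countable commutative semigroup $(\mathbb{N}\times\mathbb{N},+)$, so that the "if" direction of that theorem yields quasi-centrality of $A$.

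Clause (2), that each $D_n$ is piecewise syndetic in $\mathbb{N}\times\mathbb{N}$, follows immediately from Theorem \ref{Thm 1} (valid over $\mathbb{N}$ by the remark after its statement): $D_n$ is exactly the set $\{(a,d):a,a+d,\ldots,a+ld\in C_n\}$ of length-$(l+1)$ progressions contained in the piecewise syndetic set $C_n$.

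For clause (1), fix $(a,b)\in D_n$, so that $a+jb\in C_n$ for each $j\in\{0,1,\ldots,l\}$. Clause (1) for $\langle C_n\rangle$ supplies, for each such $j$, an index $m_j$ with $C_{m_j}\subseteq -(a+jb)+C_n$. Setting $m=\max\{m_0,\ldots,m_l\}$ and using that $\langle C_n\rangle$ is decreasing, I obtain $C_m\subseteq -(a+jb)+C_n$ simultaneously for every $j$. Now take any $(x,y)\in D_m$, so $x+jy\in C_m$ for each $j$. The splitting $a+x+j(b+y)=(a+jb)+(x+jy)$ then places $a+x+j(b+y)\in C_n$ for all $j$, i.e. $(a+x,b+y)\in D_n$. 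Hence $D_m\subseteq -(a,b)+D_n$, which is precisely clause (1) for $\langle D_n\rangle$.

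The verification is short, so I do not anticipate a serious obstacle; the one point demanding care is the passage over the finitely many indices $m_0,\ldots,m_l$, where the decreasing property of $\langle C_n\rangle$ is what allows a single $m$ to serve all $l+1$ progression terms at once. The algebraic heart is the identity $a+x+j(b+y)=(a+jb)+(x+jy)$, which cleanly separates the contribution of $(a,b)\in D_n$ from that of $(x,y)\in D_m$ and is exactly what makes the translation structure of the arithmetic-progression condition compatible with clause (1).
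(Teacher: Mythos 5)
Your proposal is correct and follows essentially the same route as the paper: unpack the quasi-central set $M$ via the combinatorial characterization (Theorem \ref{Th 1}), apply the Furstenberg--Glasner result (Theorem \ref{Thm 1}) to each piecewise syndetic level $C_n$ to get piecewise syndeticity of the progression sets $D_n$, and verify clause (1) via the identity $(a+x)+j(b+y)=(a+jb)+(x+jy)$. The only difference is that you spell out the step of taking $m=\max\{m_0,\ldots,m_l\}$ over the finitely many indices, which the paper's proof leaves implicit when it asserts the inclusion $A_N\subseteq\bigcap_{i=0}^{l}(-(a+ib)+A_n)$.
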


\begin{proof}
{\large{}As $M$ is quasi-central, there exists a decreasing sequence
piecewise syndetic subsets of $\mathbb{N}$, $\{A_{n}:n\in\mathbb{N}\}$
satisfying the property 1 in theorem 5.}{\large\par}

{\large{}As all $A_{n}$ are piecewise syndetic $\forall n\in\mathbb{N}$
in the following sequence,}{\large\par}

{\large{}\label{eqn 1}
\[
1.\quad M\supseteq A_{1}\supseteq A_{2}\supseteq\ldots\supseteq A_{n}\supseteq\ldots
\]
}{\large\par}

{\large{}The set $B=\{(a,b):\,\{a,a+b,a+2b,\ldots,a+lb\}\subset M\}$
is piecewise syndetic in $\mathbb{N\times\mathbb{N}}$ from theorem
1.}{\large\par}

{\large{}And for $i\in\mathbb{N},$ $B_{i}=\{(a,b)\in\mathbb{N\times\mathbb{N}}:\,\{a,a+b,a+2b,\ldots,a+lb\}\subset A_{i}\}\neq\phi$
is piecewise syndetic $\forall i\in\mathbb{N},$ theorem 1.}{\large\par}

{\large{}Consider,\label{eqn 2}}{\large\par}

{\large{}
\[
2.\quad B\supseteq B_{1}\supseteq B_{2}\supseteq\ldots\supseteq B_{n}\supseteq\ldots
\]
}{\large\par}

{\large{}Now choose $n\in\mathbb{N}$ and $(a,b)\in B_{n}$, then
$\{a,a+b,a+2b,\ldots,a+lb\}\subset A_{n}$.}{\large\par}

{\large{}Now choose by property 1, there exists $N\in\mathbb{N}$
such that,
\begin{equation}
A_{N}\subseteq\stackrel[i=0]{l}{\bigcap}(-(a+ib)+A_{n})
\end{equation}
}{\large\par}

{\large{}Now any $(a_{1},b_{1})\in B_{N}$ implies $\{a_{1},a_{1}+b_{1},a_{1}+2b_{1},\ldots,a_{1}+lb_{1}\}\subseteq A_{N}\subseteq\stackrel[i=0]{l}{\bigcap}(-(a+ib)+A_{n})$ }{\large\par}

{\large{}So $(a_{1}+a)+i.(b_{1}+b)\in A_{n}\forall i\in\{0,1,2,\ldots,l\}$,
hence $(a_{1},b_{1})\in-(a,b)+B_{n}$.}{\large\par}

{\large{}This implies $B_{N}\subseteq-(a,b)+B_{n}$. }{\large\par}

{\large{}Therefore for any $(a,b)\in B_{n}$, there exists $N\in\mathbb{N}$
such that $B_{N}\subseteq-(a,b)+B_{n}$ showing the property \ref{prop 1}.}{\large\par}

{\large{}This proves the theorem.}{\large\par}
\end{proof}
{\large{}Now we will give a result analog of theorem 1 for $J-sets$.}{\large\par}
\begin{thm}
{\large{}For any  $J-set$ $A\subseteq\mathbb{N}$ the collection
$\{(a,b):\,\{a,a+b,a+2b,\ldots,a+lb\}\subset A\}$ is $J-set$ in
$(\mathbb{N\times\mathbb{N}},+)$.}{\large\par}
\end{thm}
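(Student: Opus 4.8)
The plan is to argue directly from the combinatorial definition of a $J$-set, reducing the two-dimensional statement to a single application of the hypothesis that $A$ is a $J$-set in $\mathbb{N}$. Write $B=\{(a,b):\{a,a+b,\ldots,a+lb\}\subseteq A\}$. Fix a finite family $F=\{f^{(1)},\ldots,f^{(r)}\}\in\mathcal{P}_f\big((\mathbb{N}\times\mathbb{N})^{\mathbb{N}}\big)$ and split each sequence into its coordinates, $f^{(j)}(t)=(p_j(t),q_j(t))$ with $p_j,q_j\colon\mathbb{N}\to\mathbb{N}$. I must produce $(a,b)\in\mathbb{N}\times\mathbb{N}$ and $H\in\mathcal{P}_f(\mathbb{N})$ so that $(a,b)+\sum_{t\in H}f^{(j)}(t)\in B$ for every $j$; unravelling the definition of $B$, this amounts to requiring $\big(a+\sum_{t\in H}p_j(t)\big)+i\big(b+\sum_{t\in H}q_j(t)\big)\in A$ for all $j$ and all $i\in\{0,1,\ldots,l\}$.

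The key step is to encode the whole required arithmetic progression into a single finite family of scalar sequences to which the $J$-set property of $A$ can be applied. For each $j\in\{1,\ldots,r\}$ and each $i\in\{0,1,\ldots,l\}$ I define the sequence $g_{j,i}\colon\mathbb{N}\to\mathbb{N}$ by $g_{j,i}(t)=p_j(t)+i\,q_j(t)+i$, and let $G=\{g_{j,i}:1\le j\le r,\ 0\le i\le l\}$, a finite subfamily of $\mathbb{N}^{\mathbb{N}}$. Applying the hypothesis that $A$ is a $J$-set to $G$ yields $a\in\mathbb{N}$ and $H\in\mathcal{P}_f(\mathbb{N})$ with $a+\sum_{t\in H}g_{j,i}(t)\in A$ for every $g_{j,i}\in G$.

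Now I would set $b=|H|=\sum_{t\in H}1$, which lies in $\mathbb{N}$ since $H$ is nonempty, so that $(a,b)\in\mathbb{N}\times\mathbb{N}$. Expanding the sum gives
\[
a+\sum_{t\in H}g_{j,i}(t)=\Big(a+\sum_{t\in H}p_j(t)\Big)+i\Big(\sum_{t\in H}q_j(t)+|H|\Big)=\Big(a+\sum_{t\in H}p_j(t)\Big)+i\Big(b+\sum_{t\in H}q_j(t)\Big),
\]
so for each fixed $j$ the points $a_j^{*}+i\,b_j^{*}$, with $a_j^{*}=a+\sum_{t\in H}p_j(t)$ and $b_j^{*}=b+\sum_{t\in H}q_j(t)$, all lie in $A$ as $i$ ranges over $\{0,\ldots,l\}$. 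Hence $(a_j^{*},b_j^{*})\in B$, and since $(a_j^{*},b_j^{*})=(a,b)+\sum_{t\in H}f^{(j)}(t)$, the single pair $(a,b)$ together with the set $H$ witnesses the $J$-set condition for $B$.

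The step I expect to be the real obstacle is precisely this encoding: the $J$-set property supplies only one shift $a$ and one index set $H$, whereas the base point of the progression must itself vary arithmetically as $a_j^{*},a_j^{*}+b_j^{*},\ldots$. The device that resolves this is to let the common difference be manufactured by the summation itself --- the extra summand $+i$ in $g_{j,i}$ contributes $i\,|H|$, forcing $b=|H|$ --- so that one choice of $H$ simultaneously yields the correct increment for every $i$ and every $f\in F$. Once this is arranged, the verification is just the expansion displayed above, and commutativity of $(\mathbb{N}\times\mathbb{N},+)$ is all that is needed to rearrange the finite sums.
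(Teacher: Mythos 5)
Your proof is correct and takes essentially the same route as the paper: both encode the required progressions into a single finite family of scalar sequences of the form $p_j + i(c+q_j)$ (you take $c=1$ via the extra summand $+i$, the paper an arbitrary constant $b$), apply the $J$-set hypothesis on $A$ once, and obtain the witness whose second coordinate $c\lvert H\rvert$ is manufactured by the summation over $H$. The only difference is your inessential specialization $c=1$, which yields the witness $(a,\lvert H\rvert)$ in place of the paper's $(a,b\lvert H\rvert)$.
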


\begin{proof}
{\large{}Let $C=\left\{ \left(a,d\right):\left\{ a,a+d,....,a+ld\right\} \subseteq A\right\} \ldots\left(3\right)$
and our goal is to show $C$ is a $J-set$ in $(\mathbb{N\times\mathbb{N}},+)$.}{\large\par}

{\large{}Now any $f\in\mathbb{N}^{\mathbb{N}}$has the form $f=(f_{1},f_{2})$
where $f_{1},f_{2}:\mathbb{N}\mathbb{\longrightarrow N}$.}{\large\par}

{\large{}Considering $F\in P_{f}\left((\mathbb{N}\times\mathbb{N})^{\mathbb{N}}\right)$.
Then we will have to show there exists $\left(a_{1},a_{2}\right)\in\mathbb{N}\times\mathbb{N}$
and $H_{1}\in P_{f}\left(\mathbb{N}\right)$ such that for each $f\in F$,
$\left(a_{1},a_{2}\right)+\sum_{t\in H_{1}}f\left(t\right)\in C$.}{\large\par}

{\large{}Now, assuming $F=\left\{ f_{1},f_{2},\ldots,f_{m}\right\} $
where $f_{i}=\left(g_{2i-1},g_{2i}\right)$ for $i=1,2,\ldots,m.$
Then take any $b\in\mathbb{N}$ and consider the set $G\in P_{f}\left(\mathbb{N^{N}}\right)$,
where 
\[
G=\:\left\{ g_{1},g_{1}+i_{1}\left(b+g_{2}\right),\ldots,g_{2m-1},g_{2m-1}+i_{m}\left(b+g_{2m}\right)\right\} _{i_{1},i_{2},\ldots,i_{m}=1}^{l}
\]
and given $A$ is a J-set, we get $a\in\mathbb{N},\:H\in P_{f}\left(\mathbb{N}\right)$
such that $a+\sum_{t\in H}h\left(t\right)\in A$ for all $h\in G$,
i.e. $a+\sum_{t\in H}\left(g_{2i-1}+j\left(b+g_{2i}\right)\right)\left(t\right)\in A,\text{ where }i=1,2,\ldots,m\:\&\:j=0,1,....,l$.}{\large\par}

{\large{}i.e., $a+\sum_{t\in H}g_{2i-1}\left(t\right)+j\left(b\left|H\right|+\sum_{t\in H}g_{2i}\left(t\right)\right)\in A$}{\large\par}

{\large{}i.e., $\left(a+\sum_{t\in H}g_{2i-1}\left(t\right),b\left|H\right|+\sum_{t\in H}g_{2i}\left(t\right)\right)\in C$
(Follows from $\left(3\right)$)}{\large\par}

{\large{}i.e., $\left(a,b\left|H\right|\right)+\sum_{t\in H}\left(g_{2i-1},g_{2i}\right)\left(t\right)\in C$}{\large\par}

{\large{}i.e., $\left(a,b\left|H\right|\right)+\sum_{t\in H}f_{i}\left(t\right)\in C$
where $i=1,2,\ldots,m$.}{\large\par}

{\large{}So, for any choosen $F$ in $P_{f}\left((\mathbb{N}\times\mathbb{N})^{\mathbb{N}}\right)$,
there exist $\left(a,b\left|H\right|\right)\in\mathbb{N}\times\mathbb{N}$
and $H\in P_{f}\left(\mathbb{N}\right)$ such that $\left(a,b\left|H\right|\right)+\sum_{t\in H}f_{i}\left(t\right)\in C,\:f_{i}\in F,\:i=1,2,3$.}{\large\par}
\end{proof}
{\large{}Now we will give an analogous version of theorem 8.}{\large\par}
\begin{thm}
{\large{}For any  $C-set$ $A\subseteq\mathbb{N}$ the collection
$\{(a,b):\,\{a,a+b,a+2b,\ldots,a+lb\}\subset A\}$ is $C-set$ in
$(\mathbb{N\times\mathbb{N}},+)$}{\large\par}
\end{thm}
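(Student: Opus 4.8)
The plan is to imitate the proof of Theorem \ref{THM 7} line for line, replacing the hypothesis ``piecewise syndetic'' by ``$J$-set'' throughout and invoking the preceding theorem (the $J$-set version) in place of Theorem \ref{Thm 1}. Since $A$ is a $C$-set, Theorem \ref{Th 2} furnishes a decreasing sequence $\langle A_n\rangle_{n=1}^{\infty}$ of subsets of $A$ satisfying property (1) and with each $A_n$ a $J$-set. First I would set $B=\{(a,b):\{a,a+b,\ldots,a+lb\}\subseteq A\}$ and, for each $n\in\mathbb{N}$, $B_n=\{(a,b):\{a,a+b,\ldots,a+lb\}\subseteq A_n\}$. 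Because $A_n\subseteq A$ and the $A_n$ decrease, the sets $B_n$ form a decreasing sequence of subsets of $B$, so it suffices to verify the two conditions of Theorem \ref{Th 2} for $\langle B_n\rangle$ in $(\mathbb{N}\times\mathbb{N},+)$, which then certifies $B$ as a $C$-set.

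Condition (2) is immediate: each $A_n$ is a $J$-set, so by the preceding theorem each $B_n$ is a $J$-set in $(\mathbb{N}\times\mathbb{N},+)$, with no further work required. For condition (1) I would fix $n$ and $(a,b)\in B_n$, so that $a+ib\in A_n$ for every $i\in\{0,1,\ldots,l\}$. Applying property (1) of $\langle A_n\rangle$ to each of the finitely many elements $a+ib$ and using that the sequence decreases, I obtain a single index $N$ with $A_N\subseteq\bigcap_{i=0}^{l}\bigl(-(a+ib)+A_n\bigr)$, exactly as in the display in the proof of Theorem \ref{THM 7}. Then for any $(a_1,b_1)\in B_N$ each term $a_1+ib_1$ lies in $A_N$, and the regrouping $(a+a_1)+i(b+b_1)=(a+ib)+(a_1+ib_1)$ places this point in $A_n$ for every $i$; hence $(a_1,b_1)\in-(a,b)+B_n$, giving $B_N\subseteq-(a,b)+B_n$, which is property (1).

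With both conditions of Theorem \ref{Th 2} established for the decreasing sequence $\langle B_n\rangle$ of subsets of $B$, it follows that $B$ is a $C$-set in $(\mathbb{N}\times\mathbb{N},+)$, completing the argument. The only genuinely substantive ingredient is the $J$-set preservation statement proved just above, which delivers condition (2); everything else is a direct transcription of the quasi-central argument. The one step that demands care is the extraction of a common index $N$ that works simultaneously for all $l+1$ shifts $a+ib$, rather than for a single element, and here the decreasing nature of $\langle A_n\rangle$ does the needed work.
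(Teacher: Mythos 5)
Your proposal is correct and follows essentially the same route as the paper's own proof: both verify the two conditions of Theorem \ref{Th 2} for the decreasing sequence $B_n=\{(a,b):\{a,a+b,\ldots,a+lb\}\subseteq A_n\}$, using the preceding $J$-set theorem for condition (2) and the intersection $A_N\subseteq\bigcap_{i=0}^{l}\bigl(-(a+ib)+A_n\bigr)$ for condition (1). If anything, your write-up is slightly more careful than the paper's, since you make explicit how the decreasing property of $\langle A_n\rangle$ yields a single index $N$ working for all $l+1$ shifts at once.
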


\begin{proof}
{\large{}As $A$ is $C-set$, there exists a decreasing sequence $J-sets$
in $\mathbb{N}$, $\{A_{n}:n\in\mathbb{N}\}$ satisfying the property
1 in theorem 7.}{\large\par}

{\large{}As all $A_{n}$ are $J-set$ $\forall n\in\mathbb{N}$ in
the following sequence,}{\large\par}

{\large{}\label{eqn 4}
\[
4.\quad A\supseteq A_{1}\supseteq A_{2}\supseteq\ldots\supseteq A_{n}\supseteq\ldots
\]
}{\large\par}

{\large{}The set $B=\{(a,b):\,\{a,a+b,a+2b,\ldots,a+lb\}\subset M\}$
is $J-set$ in $\mathbb{N\times\mathbb{N}}$ from theorem 9.}{\large\par}

{\large{}And for $i\in\mathbb{N},$ $B_{i}=\{(a,b)\in\mathbb{N\times\mathbb{N}}:\,\{a,a+b,a+2b,\ldots,a+lb\}\subset A_{i}\}\neq\phi$
are $J-sets$ $\forall i\in\mathbb{N},$ theorem 9.}{\large\par}

{\large{}Consider,\label{eqn 5}}{\large\par}

{\large{}
\[
5.\quad B\supseteq B_{1}\supseteq B_{2}\supseteq\ldots\supseteq B_{n}\supseteq\ldots
\]
}{\large\par}

{\large{}Now choose $n\in\mathbb{N}$ and $(a,b)\in B_{n}$, then
$\{a,a+b,a+2b,\ldots,a+lb\}\subset A_{n}$.}{\large\par}

{\large{}Now by property 1, of theorem 7, there exists $N\in\mathbb{N}$
such that,
\begin{equation}
A_{N}\subseteq\stackrel[i=0]{l}{\bigcap}(-(a+ib)+A_{n})
\end{equation}
}{\large\par}

{\large{}Now any $(a_{1},b_{1})\in B_{N}$ implies $\{a_{1},a_{1}+b_{1},a_{1}+2b_{1},\ldots,a_{1}+lb_{1}\}\subseteq A_{N}\subseteq\stackrel[i=0]{l}{\bigcap}(-(a+ib)+A_{n})$ }{\large\par}

{\large{}So $(a_{1}+a)+i.(b_{1}+b)\in A_{n}\forall i\in\{0,1,2,\ldots,l\}$,
hence $(a_{1},b_{1})\in-(a,b)+B_{n}$.}{\large\par}

{\large{}This implies $B_{N}\subseteq-(a,b)+B_{n}$. }{\large\par}

{\large{}Therefore for any $(a,b)\in B_{n}$, there exists $N\in\mathbb{N}$
such that $B_{N}\subseteq-(a,b)+B_{n}$ showing the property 1 of
theorem 7.}{\large\par}

{\large{}This proves the theorem.}{\large\par}

\textbf{\large{}Acknowledgment:}{\large{} The second author acknowledges
the UGC NET-JRF grant.}{\large\par}
\end{proof}

\end{document}